\definecolor{darkred}{RGB}{150,0,0}
\definecolor{darkgreen}{RGB}{0,150,0}
\definecolor{darkblue}{RGB}{0,0,150}
\def\HD{{\sf H}}
\theoremstyle{remark}
\def\beq{\begin{equation}} 
\def\eeq{\end{equation}}
\def\beqn{\begin{eqnarray*}}
\def\eeqn{\end{eqnarray*}}
\def\Bitem{\begin{itemize}\setlength{\itemsep}{.2in}}
\def\bitem{\begin{itemize}\setlength{\itemsep}{.05in}}
\def\eitem{\end{itemize}}
\def\Benum{\begin{enumerate}\setlength{\itemsep}{.2in}}
\def\benum{\begin{enumerate}\setlength{\itemsep}{.05in}}
\def\eenum{\end{enumerate}}
\def\bmult{\begin{multline*}}
\def\emult{\end{multline*}}
\def\bcenter{\begin{center}}
\def\ecenter{\end{center}}
\def\bframe{\begin{frame}}
\def\eframe{\end{frame}}
\newcommand{\secref}[1]{Section~\ref{sec:#1}}
\DeclareMathOperator*{\argmin}{arg\, min}
\DeclareMathOperator{\diam}{diam}
\DeclareMathOperator{\dist}{dist}
\DeclareMathOperator{\supp}{supp}
\def\cF{\mathcal{F}}
\def\cX{\mathcal{X}}
\renewcommand{\P}{\operatorname{\mathbb{P}}}
\def\eps{\varepsilon}
\def\implies{\ \Rightarrow \ }
\def\iff{\ \Leftrightarrow \ }
\def\1{\mathbbm{1}}
\def\dist{\mathsf{d}}
  \theoremstyle{plain}
  \newtheorem{lemma}{Lemma}
  \theoremstyle{plain}
  \newtheorem{theorem}{Theorem}
  \theoremstyle{plain}
  \theoremstyle{plain}
  \newtheorem{remark}{Remark}
\begin{document}

\title{On the Consistency of Metric and Non-Metric $K$-medoids}

\author{Ery Arias-Castro \and He Jiang}

\date{University of California, San Diego}

\maketitle

\begin{abstract}
We establish the consistency of $K$-medoids in the context of metric spaces. We start by proving that $K$-medoids is asymptotically equivalent to $K$-means restricted to the support of the underlying distribution under general conditions, including a wide selection of loss functions. This asymptotic equivalence, in turn, enables us to apply the work of \cite{parna1986strong} on the consistency of $K$-means. This general approach applies also to non-metric settings where only an ordering of the dissimilarities is available. We consider two types of ordinal information: one where all quadruple comparisons are available; and one where only triple comparisons are available. We provide some numerical experiments to illustrate our theory. 
\end{abstract}

\section{Introduction}  \label{sec:intro}

Cluster analysis is widely regarded as one of the most important tasks in unsupervised data analysis \citep{jain1999data, kaufman2009finding}. In this paper, we consider several center based clustering methods. Specifically, we show the asymptotic equivalence of $K$-means and $K$-medoids, and use this equivalence to prove the consistency of $K$-medoids in metric and non-metric (i.e., ordinal) settings.

\subsection{$K$-means and $K$-medoids}
\label{subsec:1_1}

The problem of $K$-means can be traced back to the 1960's to early work of \cite{macqueen1967some}. As the problem is computationally difficult in higher dimensions or when the number of clusters is large, it is instead most often approached via iterative methods such as Lloyd's algorithm \citep{lloyd1982least}. Leaving these computational challenges aside, assuming the problem is solved exactly, the consistency of $K$-means as a method has been thoroughly addressed in the literature. Early in this line of work, \cite{pollard1981strong} established the consistency of $K$-means in Euclidean spaces. \cite{parna1986strong} extended the result to separable metric spaces, while \cite{parna1988stability, parna1990existence, parna1992clustering} examined the particular situation of Hilbert and Banach spaces, where the existence of an optimal solution had been considered by \cite{herrndorf1983approximation} and \cite{cuesta1988strong}.

The problem of $K$-medoids dates back to the 1980's to work of \cite{kaufmann1987clustering}, who in the process proposed the Partition Around Medoids (PAM) iterative algorithm. 
\cite{van2003new} discovered that the original PAM has problem with recognizing rather small clusters, and defined a new version of PAM based on maximizing average silhouette, as defined by \cite{kaufman725pj}. Later \cite{park2009simple} proposed a computationally simpler version of PAM akin to Lloyd's algorithm for $K$-means. See \citep[Ch 2]{kaufman2009finding}. 
In a setting where the goal is the clustering of data sequences, \cite{wang2019k} established an exponential consistency result for $K$-medoids itself (when solved exactly).
To the best of our knowledge, however, the consistency of $K$-medoids in the more standard setting of clustering points in a metric space has not been previously established. 

\begin{quote}
{\em We establish the consistency of $K$-medoids by first showing that $K$-medoids is asymptotically equivalent to $K$-means restricted to the support of the underlying distribution, and then leveraging the work of \cite{parna1986strong} on the consistency of $K$-means in metric spaces.}
\end{quote}


\subsection{Ordinal $K$-medoids}
\label{subsec:1_2}

Beyond the more standard setting where the distances are available to us, we also consider ordinal settings where only an ordering of the distances is available. 
Even when the dissimilarities are available, turning them into ranks, and thus only working with the underlying ordinal information, can be attractive in situations where the numerical value of the dissimilarities has little meaning besides providing an ordering. This is the case, for example, in psychological experiments where human subjects are tasked with rating some items in order of preference.
Working with ranks also has the advantage of added robustness to outliers.

Statisticians and other data scientists have dealt with ordinal information for decades. Without going too far afield into rank-based inference \citep{hajek1967theory} or ranking models \citep{bradley1952rank}, there is non-metric scaling, aka ordinal embedding, which is the problem of embedding a set of items based on an ordering of their pairwise dissimilarities, with pioneering work in the 1960's by \cite{shepard1962i,shepard1962ii} and \cite{kruskal1964multidimensional}. The consistency of ordinal embedding --- by which we mean any solution to the problem assuming one exists --- was already considered by \cite{shepard1966metric}, and more thoroughly addressed only recently by \cite{kleindessner2014uniqueness} and \cite{arias2017some}.

Even closer to our situation, in the area of clustering, we know that hierarchical clustering with either single or complete linkage (or the less popular median linkage) only use the ordinal information, as can be seen from the fact that the output grouping remains the same if the dissimilarities are transformed by the application of a monotonically increasing function. The well-known clustering method DBSCAN of \cite{ester1996density} can be seen as a robust variant of single linkage, in its nearest-neighbor formulation, only relies on ordinal information as well.
On the other hand, hierarchical clustering with either average linkage or Ward's criterion does not have that property.
The use of $K$-medoids in ordinal settings does not seem nearly as widespread. In fact, we could only find a few references where the idea was proposed, scattered across various fields such as computer vision \citep{zhu2011rank} and data mining \citep{zadegan2013ranked}. 
In the context of an application to the clustering of pictures of human faces, \cite{zhu2011rank} proposed a rank order distance (ROD) based on a sum of individual ranks, acquired from triple comparisons, and then applied single linkage hierarchical clustering with this distance. They argued that this distance was more appropriate for their particular application than the more standard $L_1$ distance. In a followup work, \cite{huang2020adaptive} proposed a kernel variant of ROD. With the intention of making the clustering result less sensitive to initialization and potential outliers, \cite{zadegan2013ranked} proposed the concept of hostility index based on a sum of ranks obtained from triple comparisons. 
Aside from these, \cite{achtert2006deli} proposed a dissimilarity based on the distance to the $\ell$-th nearest neighbor, which can therefore be implemented based solely on ordinal information.

\begin{quote}
{\em Besides putting ordinal $K$-medoids in the context of ordinal data, as we just did, we establish its consistency for two types of ordinal information: quadruple comparisons giving an overall ranking of all pairwise dissimilarities; and triple comparisons giving a ranking relative to each sample point.}
\end{quote}

\subsection{Setting and Content}
\label{subsec:section_1_settings_subsection}

We consider the problem of clustering some data points in a metric space into $k$ clusters, where $k$ is given. The metric space is denoted $(\cX, \dist)$ and assumed to be a locally compact Polish space. The sample is denoted $x_1, \dots, x_n$ and assumed to have been drawn from a Borel probability measure $Q$ assumed to have bounded support\footnote{This is for convenience. See \citep{parna1986strong}.} containing at least $k$ points. 
We will let $Q_n$ denote the empirical distribution, namely, $Q_n(B) := \frac1n \sum_{i=1}^n 1_{\{x_i \in B\}}$ for any set $B \subset \cX$.
For two sets $A, B \subset \cX$, define
\begin{equation}
\HD(A|B) := \sup_{a \in A} \inf_{b \in B} \dist(a, b),
\end{equation}
so that the Hausdorff distance between $A$ and $B$ is $\max\{\HD(A|B), \HD(B|A)\}$.

The organization of the paper will be as follows. 
In \secref{section_2}, we prove the asymptotic equivalence of $K$-means and $K$-medoids, and deduce from that the consistency of $K$-medoids in the metric setting.  
In \secref{ranks_application}, we consider two ordinal settings, based on quadruple and triple comparisons respectively, and establish the consistency of $K$-medoids in each case using the equivalence result from \secref{section_2}. 
We provide numerical experiments along the way to illustrate our theoretical results.
Our work is greatly inspired by that of \cite{parna1986strong}, and we will refer to his work often.

\begin{remark}
\label{canonical}
We want to mention that all our results apply when $\cX$ is a finite dimensional Banach space and $Q$ has a density with respect to the Lebesgue measure which is bounded and has compact support. 
\end{remark}

\section{Consistency of $K$-Medoids}
\label{sec:section_2}

For a $k$-tuple $A \subset \cX$, consider the risk
\begin{equation}
    \label{loss_function}
    L(A, Q) = \int_{\mathcal{X}} \min_{a \in A} \phi(\dist(x, a)) d Q(x),
\end{equation}
where $\phi : [0,\infty) \to [0,\infty)$ is a loss function assumed to be non-decreasing, continuous, and such that $\phi(d) = 0$ if and only if $d=0$ --- all these assumptions being rather standard.  
By $K$-means we mean the result of the following optimization problem:
\begin{equation}
    \label{k_means}
\text{minimize} \ L(A,Q_n) \quad
\text{over} \ A \subset \mathcal{X},\ |A| = k.
\end{equation}
And by $K$-medoids we mean the same optimization problem but restricted to $k$-tuples made of sample points:
\begin{equation}
    \label{k_medoids_version}
\text{minimize} \ L(A,Q_n) \quad
\text{over} \ A \subset \mathcal{X}_n,\ |A| = k,
\end{equation}
where $\cX_n := \{x_1, \dots, x_n\}$.
Note that 
\begin{equation}
L(A, Q_n) = \frac1n \sum_{i=1}^n \min_{a \in A} \phi(\dist(x_i, a)).
\end{equation}

It is well-known that, as formulated, \eqref{k_means} and \eqref{k_medoids_version} can behave quite differently. Take for example the case of the real line with $Q$ the uniform distribution on $[-2,-1] \cup [1,2]$. When $k = 1$, in the large-sample limit, the origin is the unique solution to $K$-means problem, while $-1$ and $1$ are the solutions to the $K$-medoids problem. Instead, we consider the following restricted form of $K$-means:
\begin{equation}
    \label{k_means_version}
\text{minimize} \ L(A,Q_n) \quad
\text{over} \ A \subset \supp(Q),\ |A| = k,
\end{equation}
where $\supp(Q)$ denotes the support of $Q$.
The analyst cannot consider this problem when the support of $Q$ is unknown, which is typically the case. But this optimization problem is only used as a device to analyze the asymptotic behavior of $K$-medoids.

\begin{theorem}
\label{theorem_1}
In the present context, $K$-medoids \eqref{k_medoids_version} is asymptotically equivalent to $K$-means \eqref{k_means_version}, which in turn is asymptotically equivalent to population version of the same problem, namely 
\begin{equation}
    \label{k_means_population}
\text{minimize} \ L(A,Q) \quad
\text{over} \ A \subset \supp(Q),\ |A| = k.
\end{equation}
We conclude that, if $A^*_n$ is a solution to \eqref{k_medoids_version}, then in probability,
\begin{equation}
L(A^*_n, Q) \xrightarrow{n\to\infty} \min_{|A| = k} L(A, Q).
\end{equation}
\end{theorem}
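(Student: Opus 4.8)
\emph{Proof strategy.} The plan is to show that the optimal values of the three problems \eqref{k_medoids_version}, \eqref{k_means_version}, \eqref{k_means_population} --- call them $V_n^{\mathrm{med}}$, $V_n^{\mathrm{rkm}}$, $V^{\mathrm{pop}}$ --- all converge to $V^{\mathrm{pop}}$ in probability, by sandwiching $V_n^{\mathrm{med}}$, and then to upgrade this to the stated convergence of $L(A_n^*, Q)$. Three simple observations frame the argument. First, since $Q$ is Borel on a Polish space, $Q(\supp(Q)) = 1$, so almost surely $\cX_n \subseteq \supp(Q)$; thus \eqref{k_medoids_version} is a restriction of \eqref{k_means_version}, giving $V_n^{\mathrm{rkm}} \le V_n^{\mathrm{med}}$. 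Second, boundedness of $\supp(Q)$ confines every dissimilarity $\dist(x,a)$ with $x, a \in \supp(Q)$ to a fixed compact interval, on which $\phi$ is uniformly continuous; I would fix a modulus of continuity $\omega$, so that $\phi(s+\delta) \le \phi(s) + \omega(\delta)$ for the relevant $s,\delta$ and $\omega(\delta) \to 0$ as $\delta \to 0^+$. Third, from \cite{parna1986strong} I would import existence of a minimizer of \eqref{k_means_population} and the uniform law of large numbers $\sup_{A \subset \supp(Q),\,|A| = k} |L(A, Q_n) - L(A, Q)| \to 0$ (in particular, in probability).

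For the upper bound on $V_n^{\mathrm{med}}$, fix $\epsilon > 0$ and pick $A^\dagger = \{a_1^\dagger, \dots, a_k^\dagger\} \subset \supp(Q)$ with $L(A^\dagger, Q) \le V^{\mathrm{pop}} + \epsilon$. Each $a_j^\dagger$ being in the support, the ball centered at $a_j^\dagger$ of radius $\epsilon$ has positive $Q$-mass, hence contains a sample point with probability tending to $1$; on that event I would snap $a_j^\dagger$ to such a sample point $\hat a_j$ and pad to get a $k$-tuple $\hat A_n \subset \cX_n$ (padding with arbitrary sample points only lowers the loss). The triangle inequality, monotonicity of $\phi$, and the modulus $\omega$ then give $L(\hat A_n, Q_n) \le L(A^\dagger, Q_n) + \omega(\epsilon)$, whence $V_n^{\mathrm{med}} \le L(A^\dagger, Q_n) + \omega(\epsilon)$ on that event. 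Since $A^\dagger$ is fixed, the ordinary law of large numbers gives $L(A^\dagger, Q_n) \to L(A^\dagger, Q) \le V^{\mathrm{pop}} + \epsilon$ in probability; sending $n \to \infty$ and then $\epsilon \to 0$ yields $V_n^{\mathrm{med}} \le V^{\mathrm{pop}} + o_{\mathbb{P}}(1)$. For the matching lower bound, the uniform law gives $V_n^{\mathrm{rkm}} = \inf_A L(A, Q_n) \ge V^{\mathrm{pop}} - \sup_A |L(A, Q_n) - L(A, Q)| = V^{\mathrm{pop}} - o_{\mathbb{P}}(1)$. Together with the first observation this sandwiches $V_n^{\mathrm{rkm}} \le V_n^{\mathrm{med}}$ between two quantities both converging to $V^{\mathrm{pop}}$, proving the asymptotic equivalence of all three problems. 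Finally, for a $K$-medoids minimizer $A_n^*$: it is feasible for \eqref{k_means_population} since $A_n^* \subset \supp(Q)$ a.s., so $L(A_n^*, Q) \ge V^{\mathrm{pop}}$, while the uniform law gives $L(A_n^*, Q) \le L(A_n^*, Q_n) + o_{\mathbb{P}}(1) = V_n^{\mathrm{med}} + o_{\mathbb{P}}(1) \to V^{\mathrm{pop}}$; hence $L(A_n^*, Q) \to \min_{|A| = k} L(A, Q)$ in probability.

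I expect the main obstacle to lie not in this argument --- the snapping construction, the triangle-inequality estimate, and the ordinary law of large numbers for a \emph{fixed} center set are all routine --- but in the inputs borrowed from \cite{parna1986strong}: the uniform law of large numbers over the (typically non-compact) family of $k$-tuples in $\supp(Q)$, and the existence of an optimal population solution. It is precisely there that boundedness of the support (the footnoted hypothesis) enters, and the role of Theorem~\ref{theorem_1} is to pin $K$-medoids against restricted $K$-means tightly enough that it inherits Pärna's consistency. A minor point to handle carefully is that the snapped set $\hat A_n$ may contain fewer than $k$ distinct points; since padding with additional sample points never increases the loss, this does not affect feasibility for \eqref{k_medoids_version}.
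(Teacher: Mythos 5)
Your argument is correct, and it reaches the same conclusions as the paper, but by a noticeably different route. The paper compares the two \emph{empirical} problems directly: it takes the minimizer $A_n$ of restricted $K$-means, projects each of its points to the nearest sample point, and controls the resulting loss increase via two dedicated lemmas --- a sample-denseness lemma ($\HD(\cX|\cX_n)\to 0$, proved by a compactness/covering argument) and a uniform-continuity lemma ($L(B,Q)\le L(A,Q)+\omega(\HD(A|B))$) --- before invoking uniform convergence of $L(\cdot,Q_n)$ to $L(\cdot,Q)$ (its Lemma~\ref{lemma_equivalence}, proved via Rao's theorem) to close the squeeze. You instead snap a \emph{fixed} $\epsilon$-optimal population configuration $A^\dagger$ to sample points falling in the $k$ balls $B(a_j^\dagger,\epsilon)$, which requires only that these finitely many fixed balls have positive mass (probability of capture tends to $1$ trivially, no uniform denseness or covering argument needed), and then you use the ordinary law of large numbers for the fixed set $A^\dagger$ plus a sandwich of the optimal values around $V^{\mathrm{pop}}$; your modulus-of-continuity estimate $L(\hat A_n,Q_n)\le L(A^\dagger,Q_n)+\omega(\epsilon)$ plays the role of the paper's Lemma~\ref{lemma_continuous} but with an explicitly controlled snapping radius rather than a Hausdorff distance to the sample. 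This is genuinely more elementary on that side, and it still delivers $|L(A_n^*,Q_n)-L(A_n,Q_n)|\to 0$ since both values converge to $V^{\mathrm{pop}}$. The one caveat is that you import the uniform law $\sup_{|A|=k}|L(A,Q_n)-L(A,Q)|\to 0$ from \cite{parna1986strong} rather than proving it: you still need it (for the lower bound and for the final step with the random set $A_n^*$), and the paper in fact re-proves it as Lemma~\ref{lemma_equivalence} precisely because P\"arna's Lemma~1 needs some adaptation to the general loss $\phi$ considered here; as long as that ingredient is granted in the stated generality, your proof is complete. (Your reading of the theorem's conclusion as the minimum over $A\subset\supp(Q)$ matches the paper's reduction to $\supp(Q)=\cX$.)
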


\begin{remark}
As discussed in \citep{cuesta1988strong, parna1990existence, parna1992clustering}, a $K$-means problem may not have a solution. In our situation, however, we are assuming that the space is a locally compact Polish space, and a solution can be shown to exist by a simple compactness argument together with our assumptions on $\phi$ (and the fact that the distance function is always continuous in any metric space it equips). This applies to \eqref{k_means}, \eqref{k_means_version} and \eqref{k_means_population}.
\end{remark}

\begin{proof}
Since everything happens within the support of $Q$, we may assume without loss of generality that $Q$ is supported on the entire space, meaning that $\supp(Q) = \cX$. And since we assume $\supp(Q)$ to be bounded, we are effectively assuming that $\cX$ is bounded, and therefore compact since it is assumed to be locally compact.

The asymptotic equivalence of \eqref{k_means_version} and \eqref{k_means_population} is the consistency result of \cite{parna1986strong}. It can be deduced easily from the arguments we present below, which themselves are by-and-large adapted from \citep{parna1986strong}. So all we are left to do is prove that \eqref{k_medoids_version} is asymptotically equivalent to  \eqref{k_means_version}. To be sure, by this we mean that, if $A_n^*$ is a solution to the former and $A_n$ a solution to the latter, then
\begin{equation}
\big|L(A_n^*, Q_n) - L(A_n, Q_n)\big| \xrightarrow{n\to\infty} 0,
\end{equation}
in probability.
Because by definition $L(A_n^*, Q_n) \ge L(A_n, Q_n)$, all we need to show is that
\begin{equation}
\limsup_{n\to\infty}\, L(A_n^*, Q_n) - L(A_n, Q_n) \le 0.
\end{equation}

The remaining of the proof consists of three steps. 
We first show in Lemma~\ref{lemma_equivalence} below that $L(A, Q_n) \to L(A, Q)$ as $n\to\infty$, uniformly over $A$. We then show in Lemma~\ref{lemma_continuous} further down that $A \mapsto L(A, Q)$ is uniformly continuous. 
The last step consists in using these results in conjunction with the `squeeze theorem'. 

By the uniform convergence established in Lemma~\ref{lemma_equivalence}, we have
\begin{equation}
\lim_{n\to\infty} |L(A^*_n, Q_n) - L(A^*_n, Q)| = 0,
\end{equation}
as well as 
\begin{equation}
\lim_{n\to\infty} |L(A_n, Q_n) - L(A_n, Q)| = 0.
\end{equation}
Therefore, all we need to show is that
\begin{equation}
\label{equivalence1}
\limsup_{n\to\infty}\, L(A_n^*, Q) - L(A_n, Q) \le 0.
\end{equation}

For every point in $A_n$ find the closest sample point, and gather all these in $B^*_n$. 
Note that by Lemma~\ref{lemma_dense}, 
\begin{equation}
h_n := \HD(A_n|B_n) = \max_{a \in A_n} \min_{b \in B^*_n} \dist(a,b) \xrightarrow{n\to\infty} 0,
\end{equation}
in probability.
Hence, by Lemma~\ref{lemma_continuous}, we have
\begin{equation}
\limsup_{n\to\infty} L(B^*_n, Q) - L(A_n, Q) \le \limsup_{n\to\infty} \omega(h_n) = 0.
\end{equation}
With the fact that $L(A_n^*, Q_n) \le L(B^*_n, Q_n)$ by definition of $A^*_n$, together with the uniform convergence also giving
\begin{equation}
\lim_{n\to\infty} |L(B^*_n, Q_n) - L(B^*_n, Q)| = 0,
\end{equation}
we thus conclude that \eqref{equivalence1} holds.
\end{proof}

\begin{lemma}
\label{lemma_dense}
Assuming that $\cX$ is compact and that $\supp(Q) = \cX$, in probability,
\begin{equation}
\HD(\cX | \cX_n) = \sup_{x \in \cX} \min_{i \in [n]} \dist(x, x_i) \to 0, \quad n \to \infty.
\end{equation}
\end{lemma}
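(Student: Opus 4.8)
The plan is to exploit compactness of $\cX$ via a finite-net argument, combined with the fact that $\supp(Q) = \cX$ forces every ball to carry positive mass, so that it is hit by a sample point with overwhelming probability as $n \to \infty$. Concretely, fix $\varepsilon > 0$. By compactness, $\cX$ admits a finite cover by balls $B(y_1, \varepsilon/2), \dots, B(y_m, \varepsilon/2)$ for some points $y_1, \dots, y_m \in \cX$ and some $m = m(\varepsilon) < \infty$. Since $\supp(Q) = \cX$, each of these balls has positive $Q$-mass: $q_j := Q\big(B(y_j, \varepsilon/2)\big) > 0$ for every $j \in [m]$. Set $q := \min_{j \in [m]} q_j > 0$.

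Next I would estimate the probability that some ball in the net is missed by the sample. For a fixed $j$, the probability that none of $x_1, \dots, x_n$ lands in $B(y_j, \varepsilon/2)$ is $(1 - q_j)^n \le (1-q)^n$, since the $x_i$ are i.i.d.\ from $Q$. A union bound over $j \in [m]$ gives
\begin{equation}
\Pr\Big( \exists\, j \in [m] : \cX_n \cap B(y_j, \varepsilon/2) = \varnothing \Big) \le m (1-q)^n \xrightarrow{n \to \infty} 0.
\end{equation}
On the complementary event, every ball $B(y_j, \varepsilon/2)$ contains at least one sample point. Now take any $x \in \cX$; it lies in some $B(y_j, \varepsilon/2)$, which on this event contains a sample point $x_i$, so by the triangle inequality $\dist(x, x_i) \le \dist(x, y_j) + \dist(y_j, x_i) < \varepsilon/2 + \varepsilon/2 = \varepsilon$. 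Hence $\min_{i \in [n]} \dist(x, x_i) < \varepsilon$, and taking the supremum over $x \in \cX$ yields $\HD(\cX \mid \cX_n) \le \varepsilon$ on this event. Therefore $\Pr\big( \HD(\cX \mid \cX_n) > \varepsilon \big) \le m(\varepsilon)(1-q(\varepsilon))^n \to 0$, which is exactly convergence in probability to $0$.

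The argument is essentially routine; the only point requiring care is the passage from "every net ball is hit" to the uniform bound on $\HD(\cX\mid\cX_n)$, where one must make sure the net is an $\varepsilon/2$-net (not an $\varepsilon$-net) so that the two triangle-inequality contributions add up to $\varepsilon$. A secondary subtlety worth noting, though not an obstacle here, is measurability of the event $\{\HD(\cX\mid\cX_n) > \varepsilon\}$: separability of $\cX$ (it is Polish) lets us replace the supremum over $x \in \cX$ by a supremum over a countable dense subset, so no measurability issues arise. One could alternatively phrase the whole thing through the total boundedness of $\cX$ without invoking compactness directly, but since the theorem's proof has already reduced to the compact case, the finite-net formulation above is the cleanest.
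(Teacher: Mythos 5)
Your proposal is correct and follows essentially the same route as the paper: a finite net from compactness, positive $Q$-mass of every ball from $\supp(Q)=\cX$, a union bound giving a bound of the form $m(1-q)^n \to 0$, and the triangle inequality to pass from hitting every net ball to a uniform bound on $\HD(\cX\mid\cX_n)$. The only cosmetic difference is that the paper works with balls of radius $r$ and bounds $\P(\HD(\cX\mid\cX_n)\ge 2r)$ (stating the triangle-inequality step contrapositively), whereas you use an $\varepsilon/2$-net and bound $\P(\HD(\cX\mid\cX_n)>\varepsilon)$ directly.
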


\begin{proof}
The arguments are standard and follow from the definition of $\supp(Q)$. Indeed, $\supp(Q)$ is the complement of the largest open set $D$ in $\cX$ such that $Q(D) = 0$. Since $\supp(Q) = \cX$ by assumption, it must be that $Q(B(x, r)) > 0$ for all $x \in \cX$ and all $r > 0$, where $B(x, r)$ is defined as the closed ball centered at $x$ with radius $r$. Fix $r > 0$. Because $\cX$ is compact there is $y_1, \dots, y_m \in \cX$ such that $\cX = \bigcup_j B(y_j, r)$. By the triangle inequality,
\begin{align*}
&\HD(\cX|\cX_n) \ge 2 r \\
&\iff \exists x : \min_i \dist(x,x_i) \ge 2r \\
&\implies \exists j : \min_i \dist(y_j, x_i) \ge r,
\end{align*}
and by the union bound, this implies that
\begin{align*}
&\P(\HD(\cX|\cX_n) \ge 2 r) \\
&\le \sum_j \P(\min_i \dist(y_j, x_i) \ge r) \\
&= \sum_j (1 - Q(B(y_j, r))^n \\
&\le m (1 - \min_j Q(B(y_j, r))^n \\
&\to 0, \quad n \to \infty.
\end{align*}
Since $r>0$ is arbitrary, the claim is established.
\end{proof}

\subsection{Uniform Convergence Lemma}
\label{subsec:convergence_lemma}

\begin{lemma}
    \label{lemma_equivalence}
Assuming that $\cX$ is compact, we have, in probability,
    \begin{equation}
        \label{lemma_equality}
        \limsup_{n \to \infty} \sup_{|A| \le k} \left|L(A, Q_n) - L(A, Q)\right| = 0.
    \end{equation}
\end{lemma}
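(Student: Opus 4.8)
The plan is to establish a uniform strong law of large numbers over the index set $\{A \subset \cX : |A| \le k\}$ by a standard finite-net argument, exploiting the compactness of $\cX$ together with the continuity and monotonicity of $\phi$. First, since $\cX$ is compact its diameter $D := \sup_{x,y \in \cX} \dist(x,y)$ is finite, so every integrand $f_A(x) := \min_{a \in A} \phi(\dist(x,a))$ satisfies $0 \le f_A \le \phi(D) < \infty$; moreover $(x,y) \mapsto \phi(\dist(x,y))$ is uniformly continuous on the compact set $\cX \times \cX$. Hence, given $\epsilon > 0$, there is $\delta > 0$ such that $\dist(a,a') \le \delta$ implies $|\phi(\dist(x,a)) - \phi(\dist(x,a'))| \le \epsilon$ for every $x \in \cX$.

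Second, I would transfer this modulus to the functions $f_A$ via the elementary inequality $|\min_i u_i - \min_i v_i| \le \max_i |u_i - v_i|$. Cover $\cX$ by finitely many balls $B(y_1,\delta), \dots, B(y_m,\delta)$. Given any $A$ with $|A| \le k$, replace each of its points by the center of a ball containing it; this produces $\tilde A \subset \{y_1,\dots,y_m\}$ with $|\tilde A| \le k$ and, by the two facts above, $\sup_{x \in \cX} |f_A(x) - f_{\tilde A}(x)| \le \epsilon$, hence $|L(A,Q_n) - L(\tilde A,Q_n)| \le \epsilon$ and $|L(A,Q) - L(\tilde A,Q)| \le \epsilon$. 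The key point is that there are only finitely many possible $\tilde A$ (at most $\sum_{j \le k} \binom{m}{j}$ of them).

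Third, for each of these finitely many $\tilde A$, the ordinary strong law of large numbers applies to the i.i.d.\ bounded random variables $f_{\tilde A}(x_1), f_{\tilde A}(x_2), \dots$, giving $L(\tilde A, Q_n) \to L(\tilde A, Q)$ almost surely, and taking the maximum over the finite collection yields $\max_{\tilde A} |L(\tilde A,Q_n) - L(\tilde A,Q)| \to 0$ almost surely. Combining the three steps, $\sup_{|A| \le k} |L(A,Q_n) - L(A,Q)| \le 2\epsilon + \max_{\tilde A} |L(\tilde A,Q_n) - L(\tilde A,Q)|$, so its $\limsup$ is at most $2\epsilon$ almost surely; since $\epsilon$ is arbitrary, the supremum tends to $0$ (in fact almost surely, which is stronger than the stated convergence in probability).

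There is no deep obstacle; the one thing that needs care is verifying that a $\delta$-net of $\cX$ really does induce a finite $\epsilon$-net of the function class $\{f_A : |A| \le k\}$ in the sup-norm — this is exactly where the uniform continuity of $\phi \circ \dist$ and the $\min$-inequality do the work — and that $\phi(D) < \infty$ legitimizes invoking the SLLN. Equivalently, one could phrase the argument as: the family $\{f_A\}$ is uniformly bounded and uniformly equicontinuous on $\cX$, hence totally bounded in $C(\cX)$ by Arzel\`a--Ascoli (this is essentially the content of Lemma~\ref{lemma_continuous}), and a uniform SLLN holds over any such class.
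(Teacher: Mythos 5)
Your argument is correct, and it reaches the conclusion by a genuinely different (and more elementary) route than the paper. The paper casts $\{f_A : |A|\le k\}$ as an equicontinuous, uniformly bounded family of continuous functions and then invokes Theorem 3.2 of \cite{rao1962relations} on uniform convergence of integrals, combined with Varadarajan's theorem that $Q_n \to Q$ weakly almost surely; the analytic work there is exactly the equicontinuity computation (triangle inequality plus the min-swap and the modulus of continuity of $\phi$ on $[0,\diam(\cX)]$), which is the same core estimate you use when you transfer the modulus of $\phi\circ\dist$ to the $f_A$'s. What you do differently is replace the citation of Rao's theorem by an explicit finite $\delta$-net in the \emph{parameter} $A$: total boundedness of $\cX$ gives finitely many candidate centers, the min-inequality gives a sup-norm $\epsilon$-net of the function class, and the ordinary SLLN over the finitely many $\tilde A$ finishes the proof. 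This buys a self-contained argument that yields almost sure convergence directly (the paper's route also does, via Varadarajan, though the lemma only claims convergence in probability), at the cost of being tailored to this class rather than citing a general uniform-convergence theorem. Two small points you should tidy up: the almost-sure conclusion for arbitrary $\epsilon$ needs the standard intersection over a countable sequence $\epsilon_m \downarrow 0$, since your exceptional null set depends on $\epsilon$; and your parenthetical attributing the equicontinuity/Arzel\`a--Ascoli step to Lemma~\ref{lemma_continuous} is slightly off --- that lemma is the continuity of $A \mapsto L(A,Q)$ with respect to $\HD(A|B)$, used later in Theorem~\ref{theorem_1}, whereas the equicontinuity in $x$ is established inside the paper's proof of this very lemma. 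Neither affects the validity of your proof.
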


The rest of this subsection is devoted to proving this lemma. It is enough to prove the variant where $|A| \le k$ is replaced by $|A| = k$.
The proof is very similar to the proof of \citep[Lem 1]{parna1986strong}, with some differences. We provide a full proof for the sake of completeness. 

Note that, like $L(A,Q)$, $L(A,Q_n)$ can be expressed as an integral:
\begin{equation}
L(A, Q_n) = \int_{\mathcal{X}} \min_{a \in A} \phi(\dist(x, a)) d Q_n(x).
\end{equation}

To each finite set $A$, we associate the following function
\begin{equation}
    f_A(x) = \min_{a \in A} \phi(\dist(x,a)).
\end{equation}
Define the following class of functions
\begin{equation}
    \mathcal{F} = \{f_A: A \subset \mathcal{X}, \left|A\right|=k\}.
\end{equation}

\begin{lemma}[Th 3.2 of \citep{rao1962relations}]
    \label{rao_theorem_32}
Let $\mathcal{F}$ be a family of continuous functions on a separable metric space $\mathcal{X}$ which is equicontinuous and admits a continuous envelope (there is $g$ continuous such that $|f(x)| \leq g(x)$ for all $f \in \cF$). In this context, suppose that $(\mu_n)$ is a sequence of measures on $\cX$ converging weakly to $\mu$, another measure on $\cX$ with $\int g d\mu_n \xrightarrow{} \int g d\mu <\infty$. 
    Then we have:
    \begin{equation}
        \limsup_{n\xrightarrow{}\infty} \sup_{f \in \mathcal{F}} \left|\int f d\mu_n - \int f d\mu\right| = 0.
    \end{equation}
\end{lemma}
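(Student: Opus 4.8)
The plan is to deduce this uniform weak-convergence statement from ordinary weak convergence applied to finitely many bounded continuous functions, the passage from one function to finitely many being organized by three devices: the continuous envelope $g$ controls the large values of every $f \in \cF$ simultaneously, tightness confines the relevant mass to a compact set, and equicontinuity lets us replace the infinite family $\cF$ by a finite net. I would carry this out in three reductions, all using only genuinely continuous and bounded test functions so that the hypothesis $\mu_n \to \mu$ weakly can be invoked directly at each stage.

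First I would establish uniform integrability of the envelope. For each level $M > 0$ the truncation $g \wedge M$ is bounded and continuous, so weak convergence gives $\int (g \wedge M)\,d\mu_n \to \int (g \wedge M)\,d\mu$; subtracting this from the hypothesis $\int g\,d\mu_n \to \int g\,d\mu < \infty$ yields $\int (g - g\wedge M)\,d\mu_n \to \int (g - g\wedge M)\,d\mu$, whose right-hand side tends to $0$ as $M \to \infty$ by dominated convergence. Since for $g \ge 0$ one has $\int_{\{g > M\}} g\,d\mu_n \le 2\int (g - g\wedge (M/2))\,d\mu_n$, a routine argument then upgrades this into the uniform tail bound: for every $\epsilon > 0$ there is $M$ with $\sup_n \int_{\{g > M\}} g\,d\mu_n \le \epsilon$ and $\int_{\{g > M\}} g\,d\mu \le \epsilon$. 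Because $|f| \le g$ for all $f \in \cF$, the same $M$ controls $\int_{\{g > M\}} |f|\,d\mu_n$ and $\int_{\{g > M\}} |f|\,d\mu$ uniformly over $\cF$.

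Next I would truncate and localize. Fix a continuous cutoff $\chi = \psi(g)$ with $\psi \equiv 1$ on $[0,M]$, $\psi \equiv 0$ on $[2M,\infty)$, and $0 \le \psi \le 1$; then $\cF_M := \{f\chi : f \in \cF\}$ is uniformly bounded by $2M$, and the replacement error $|\int f\,d\nu - \int f\chi\,d\nu|$ is dominated by the tail integral of the previous step, uniformly over $f \in \cF$ and over $\nu \in \{\mu_n, \mu\}$. Because $\mu_n \to \mu$ weakly, Prokhorov's theorem (applicable since the ambient space is Polish in our setting) makes $(\mu_n)$ tight, so there is a compact $K$ with $\sup_n \mu_n(K^c)$ as small as we like; as $|f\chi| \le 2M$, the contribution $\int_{K^c} |f\chi|\,d\mu_n \le 2M\,\mu_n(K^c)$ is uniformly negligible, and likewise for $\mu$. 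It thus suffices to control $\sup_{f \in \cF} |\int_K f\chi\,d\mu_n - \int_K f\chi\,d\mu|$.

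Finally I would apply Arzelà–Ascoli on $K$: the restriction $\cF_M|_K$ is uniformly bounded and equicontinuous, hence totally bounded in $C(K)$ under the supremum norm, so it admits a finite $\epsilon$-net $f_1\chi,\dots,f_N\chi$. Every $f\chi$ agrees with some $f_j\chi$ up to $\epsilon$ in sup norm on $K$, so their integrals over $K$ against each of $\mu_n,\mu$ differ by at most $\epsilon$ times the total mass; and for each of the finitely many net members weak convergence yields $\int f_j\chi\,d\mu_n \to \int f_j\chi\,d\mu$. Taking the maximum over $j$ and assembling the net error, the compact-restriction error, and the tail error gives a bound $C\epsilon + o(1)$ uniformly over $\cF$, and letting $\epsilon \downarrow 0$ proves the lemma. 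I expect the main obstacle to be the uniform-integrability step: converting the single limit $\int g\,d\mu_n \to \int g\,d\mu$ into the uniform-in-$n$ tail bound $\sup_n \int_{\{g > M\}} g\,d\mu_n \le \epsilon$ requires splitting the supremum into a tail $n \ge N$, handled by the pointwise limit, and finitely many indices $n < N$, handled by dominated convergence in $M$. This is needed precisely because $g$ is unbounded and $\cX$ is not assumed compact, which is also what forces the detour through tightness and Arzelà–Ascoli rather than a direct argument; the remaining steps are routine $\epsilon$-net bookkeeping once the envelope is under uniform control.
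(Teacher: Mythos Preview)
The paper does not prove this lemma; it is quoted verbatim as Theorem~3.2 of \cite{rao1962relations} and used as a black box in the proof of Lemma~\ref{lemma_equivalence}. So there is no ``paper's own proof'' to compare against.

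Your argument is a correct and standard route to this kind of uniform weak-convergence statement: control tails via the envelope and the hypothesis $\int g\,d\mu_n \to \int g\,d\mu$, localize via tightness to a compact set, and then pass to a finite $\epsilon$-net by Arzel\`a--Ascoli. The uniform-integrability computation is right (the inequality $\int_{\{g>M\}} g\,d\mu_n \le 2\int (g - g\wedge (M/2))\,d\mu_n$ does follow from $g - M/2 \ge g/2$ on $\{g>M\}$), and the bookkeeping between $K$ and $\cX$ is fine since each passage costs at most $2M\,\mu_n(K^c)$.

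The one point that does not quite match the stated generality is the appeal to Prokhorov. You note ``applicable since the ambient space is Polish in our setting,'' and indeed in the paper's setting $\cX$ is locally compact Polish, so this is harmless for the application. But the lemma as stated is for a separable metric space, where a single Borel probability measure need not be tight and the Prokhorov/Le~Cam step can fail. So what you have written proves the version actually needed in the paper, not Rao's theorem in its full stated generality; if you want the latter you would need either an additional tightness hypothesis on $\mu$ or an argument that bypasses compactness altogether.
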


We apply this result with $\mu_n = Q_n$ and $\mu = Q$, for which the weak convergence is satisfied with probability~1 \citep{varadarajan1958convergence}. 
The existence of an envelope function $g$ satisfying the requirements for the function class of interest, $\cF$ above, is here immediate since for any $A$,
\begin{equation}
0 \le f_A(x) \le \phi(\diam(\cX)) < \infty,
\end{equation}
so that we may take $g \equiv \phi(\diam(\cX))$.
It only remains to show $\mathcal{F}$ is equicontinuous.
This amounts to showing that, for any $y_0 \in \mathcal{X}$ and any $\epsilon>0$, there exists a $\delta >0$, such that $|f_A(y_0)-f_A(y)|<\epsilon$ for any $k$-tuple $A$ and any $y \in B(y_0, \eps)$.

For the given $y_0$ and $y$, we denote $a(y_0)$ and $a(y)$ closest points in $A$ to them so that 
\begin{equation*}
    \min_{a \in A} \dist(y_0, a) - \min_{a \in A} \dist(y, a) = \dist(y_0, a(y_0)) - \dist(y, a(y)).
\end{equation*}
By definition and the triangle inequality,
\begin{align*}
&\dist(y_0, a(y_0)) - \dist(y, a(y)) \\
&\leq \dist(y_0, a(y)) - \dist(y, a(y)) \\
&\leq \dist(y_0, y),
\end{align*}
and similarly,
\begin{align*}
&\dist(y_0, a(y_0)) - \dist(y, a(y)) \\
&\geq \dist(y_0, a(y_0)) - \dist(y, a(y_0)) \\
&\geq -\dist(y_0, y).
\end{align*}
We thus deduce that
\begin{equation}
     \left|\min_{a \in A} \dist(y_0, a) - \min_{a \in A} \dist(y, a)\right| \le \dist(y_0, y).
\end{equation}

Since $\phi$ is assumed to be continuous, it is uniformly continuous on $[0,\diam(\cX)]$. Let $\omega$ denote its modulus of continuity on that interval so that
\begin{equation*}
\big|\phi(d) - \phi(d')\big|
\le \omega(|d-d'|), \quad \forall d,d' \in [0,\diam(\cX)].
\end{equation*}

We then have
\begin{align}
    &\left|f_A(y_0) - f_A(y)\right| \\ 
    &= \left|\min_{a \in A} \phi(\dist(y_0, a)) - \min_{a \in A} \phi(\dist(y, a))\right| \\
    &= \left|\phi \Big(\min_{a \in A} \dist(y_0, a)\Big) - \phi\Big(\min_{a \in A} \dist(y, a)\Big)\right|\\
    &\le \omega(\dist(y_0, y)),
\end{align}
using the monotonicity of $\phi$ along the way.
We have proved that $\cF$ is indeed equicontinuous.
Therefore the proof of Lemma~\ref{lemma_equivalence} is complete.

\subsection{Uniform Continuity Lemma}
\label{subsec:continuity_lemma}

\begin{lemma}
    \label{lemma_continuous}
For any two sets $A, B \subset \cX$, we have
\begin{equation}
L(B, Q) \le L(A, Q) + \omega(\HD(A|B)),
\end{equation}
where $\omega$ is the modulus of continuity of $\phi$ on $[0, \diam(\cX)]$.
\end{lemma}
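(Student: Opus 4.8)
The plan is to reduce the lemma to a \emph{pointwise} comparison of the two integrands. Recall $f_A(x) = \min_{a \in A}\phi(\dist(x,a))$, so that $L(A,Q) = \int f_A \, dQ$ and likewise for $B$. Since $Q$ is a probability measure, it suffices to prove that $f_B(x) \le f_A(x) + \omega(\HD(A|B))$ for every $x \in \cX$; integrating this inequality against $Q$ then gives the statement.

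Fix $x \in \cX$ and set $d_A(x) := \inf_{a \in A}\dist(x,a)$ and $d_B(x) := \inf_{b \in B}\dist(x,b)$. Because $\phi$ is non-decreasing and continuous, one has $f_A(x) = \phi(d_A(x))$ and $f_B(x) = \phi(d_B(x))$ (for finite $A$, as in all our applications, the infimum is a minimum; the identity persists in general by continuity of $\phi$). The key geometric estimate I would establish is
\[
d_B(x) \le d_A(x) + \HD(A|B).
\]
This follows from the triangle inequality: for any $a \in A$ and any $\epsilon > 0$, the definition of $\HD(A|B)$ as a supremum of infima provides a $b \in B$ with $\dist(a,b) \le \HD(A|B) + \epsilon$, whence $d_B(x) \le \dist(x,b) \le \dist(x,a) + \HD(A|B) + \epsilon$; taking the infimum over $a \in A$ and letting $\epsilon \downarrow 0$ yields the claim.

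It remains to pass from distances to $\phi$-values. If $d_B(x) \le d_A(x)$, then $f_B(x) \le f_A(x)$ by monotonicity of $\phi$, and the desired bound holds trivially since $\omega \ge 0$. Otherwise $d_A(x) < d_B(x) \le d_A(x) + \HD(A|B)$, so both $d_A(x)$ and $d_B(x)$ lie in $[0, \diam(\cX)]$ with $|d_B(x) - d_A(x)| \le \HD(A|B)$; the defining property of the modulus of continuity $\omega$ on $[0,\diam(\cX)]$, together with the fact that $\omega$ is non-decreasing, then gives $f_B(x) - f_A(x) = \phi(d_B(x)) - \phi(d_A(x)) \le \omega\big(|d_B(x) - d_A(x)|\big) \le \omega(\HD(A|B))$. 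This establishes the pointwise inequality, and hence the lemma.

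There is no substantial obstacle here: the proof is just the triangle inequality combined with the monotonicity of $\phi$ and of $\omega$. The only points requiring a little care are handling possibly non-attained infima via $\epsilon$-approximate minimizers, and separating off the case $d_B(x) \le d_A(x)$ so that the arguments fed into $\omega$ stay inside $[0,\diam(\cX)]$, the interval on which the modulus of continuity is controlled.
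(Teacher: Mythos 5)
Your proof is correct and follows essentially the same route as the paper: the triangle inequality gives $\min_{b\in B}\dist(x,b) \le \min_{a\in A}\dist(x,a) + \HD(A|B)$ pointwise, and then monotonicity of $\phi$ together with its modulus of continuity on $[0,\diam(\cX)]$ yields the pointwise bound, which is integrated against $Q$. Your extra care with $\epsilon$-approximate minimizers (for possibly non-attained infima) and the explicit case split keeping the argument of $\omega$ in range are minor refinements of the same argument, not a different approach.
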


The rest of this subsection is devoted to proving this lemma.
Fix two sets $A, B \subset \mathcal{X}$, and let $h := \HD(A|B)$. For any $a \in A$, define $b_a$ as the closest point in $B$ to $a$. 
Notice that by definition:
\begin{equation}
    \dist(a, b_a) \leq h,
\end{equation} 
and thus with the triangle inequality, for any point $x$ we have:
\begin{equation}
    \label{d_x_a_larger}
    \dist(x, a) \geq \dist(x, b_a) - \dist(a, b_a) \geq \dist(x, b_a) - h.
\end{equation}
Taking minimums we get:
\begin{equation}
    \label{minimum_a_distance}
    \min_{a \in A} \dist(x, a)  \geq \min_{a \in A} \dist(x, b_a) - h \geq \min_{b \in B} \dist(x, b) - h.
\end{equation}

Using the fact that $\phi$ is non-decreasing, we then have:
\begin{align}
    &\min_{a \in A} \phi(\dist(x, a)) - \min_{b \in B} \phi(\dist(x, b)) \\
    &= \phi\Big(\min_{a \in A} \dist(x, a)\Big) - \phi\Big(\min_{b \in B} \dist(x, b)\Big) \\
    &\geq -\omega (h). 
\end{align}
Therefore, by integrating with respect to $Q$, we obtain:
\begin{align*}
    &L(A, Q) - L(B, Q) \\
    &= \int \min_{a \in A} \phi(\dist(x, a)) dQ(x) - \int \min_{b \in B} \phi(\dist(x, b)) dQ(x) \\
    &= \int \Big[\min_{a \in A} \phi(\dist(x, a)) - \min_{b \in B} \phi(\dist(x, b))\Big] dQ(x) \\
    &\geq -\omega(h).
\end{align*}

\subsection{Simulations}

We report on a simple experiment illustrating the asymptotic equivalence established in Theorem~\ref{theorem_1}. To keep a balance between the necessity to probe an asymptotic result ($n$ large enough) and computational feasibility ($n$ not too large), we choose to work with a sample of size $n=2000$. We generate data from two equally weighted Gaussian distributions in $R^2$, centered at $(-0.5,0)$ and $(0.5,0)$, each with covariance $0.05 \times {I_2}$. Each setting is repeated $50$ times. The result of this experiments is summarized in Table~\ref{tab:equivalence_2cluster}. As can be seen from this experiment, although varying according to different metrics and loss functions, the performance of $K$-means and $K$-medoids are indeed very similar.

\begin{table}[h]
\centering
\caption{Mean values and standard deviations of the Average Center Error (error) and the Adjusted Rand Index (ARI) of $K$-means and $K$-medoids for various metrics and loss functions.}
\label{tab:equivalence_2cluster}
\bigskip
\setlength{\tabcolsep}{0.05in}
\begin{tabular}{p{0.06\textwidth} p{0.12\textwidth} p{0.12\textwidth} p{0.12\textwidth} }
\toprule
\text{   } & \text{   } & {\bf $K$-means} & {\bf $K$-medoids} \\ \midrule

$L_1$ & error [$\times 10^{-2}$] & 1.2 (0.4) & 1.8 (0.7) \\
\text{   } & ARI & 0.780 (0.017) & 0.778 (0.016)\\ \midrule

$\sqrt{L_2}$ & error [$\times 10^{-2}$] & 8.9 (1.7) & 11.7 (2.5) \\ 
\text{   } & ARI & 0.784 (0.020) & 0.782 (0.022)\\ \midrule

$L_2$ & error [$\times 10^{-3}$] & 9.4 (3.2) & 12.3 (4.3) \\ \text{   } & ARI & 0.789 (0.017) & 0.789 (0.017)\\ \midrule

${L_2}^2$ & error [$\times 10^{-4}$] & 1.1 (0.9) & 2.3 (1.7) \\ 
\text{   } &ARI & 0.785 (0.016) & 0.785 (0.016)\\ \midrule
 
$L_\infty$ & error [$\times 10^{-3}$] & 8.9 (3.4) & 12.0 (4.2) \\ 
\text{   } & ARI & 0.785 (0.021) & 0.783 (0.020)\\ 
\bottomrule
\end{tabular}
\end{table}

\section{Consistency of Ordinal $K$-Medoids}
\label{sec:ranks_application}

In this section we consider the problem of clustering with only an ordering of the dissimilarities. We consider two such orderings, one based on quadruple comparisons and another based on triple comparisons. We apply the results from \secref{section_2} to show that, in both cases, $K$-medoids is consistent.

\subsection{Quadruple Comparisons}
\label{subsec:3_1}
First we consider a situation in which all quadruple comparisons of the form `Is $\dist(x_i, x_j)$ larger or smaller than $\dist(x_l, x_m)$?' are available. Equivalently, this is a situation in which a complete ordering of the pairwise dissimilarities is available.

For $i \in [n]$, let $R_i(a)$ denote the rank of $\dist(x_i, a)$ among $\{\dist(x_l, x_m): l < m\}$, and for a $k$-tuple $A$, define 
\begin{align}
S_{\rm rank}(A, Q_n) = \frac{1}{n} \sum_{i=1}^n \min_{a \in A} \frac{R_i(a)}{\frac{n(n-1)}{2}}.
\end{align}
By ordinal $K$-medoids we mean the following optimization problem:
\begin{equation}
\label{k_medoids_quadruple}
\text{minimize} \ S_{\rm rank}(A,Q_n) \quad
\text{over} \ A \subset \mathcal{X}_n,\ |A| = k.
\end{equation}
This problem can be posed with the available information, and thus in principle can be solved.
The equivalent restricted variant of ordinal $K$-means corresponds the following optimization problem:
\begin{equation}
\text{minimize} \ S_{\rm rank}(A,Q_n) \quad
\text{over} \ A \subset \supp(Q),\ |A| = k.
\end{equation}
As before, the latter is used as a bridge to show that the former is asymptotically equivalent to the population version of this $K$-means problem, which is given by
\begin{equation}
\text{minimize} \ S(A,Q) \quad
\text{over} \ A \subset \supp(Q),\ |A| = k,
\end{equation}
where 
\begin{align}
\label{S_population}
S(A, Q) := \int \min _{a \in A} G(\dist(x, a)) dQ(x),
\end{align}
with
\begin{equation}
    \label{gd_definition}
    G(t) := \P(\dist(X, X^\prime)\leq t),
\end{equation}
$X, X^\prime$ being independent with distribution $Q$. 

Here is the missing link between $S_{\rm rank}$ and $S$.
\begin{lemma}
\label{lemma_S}
The following holds in probability:
\begin{equation}
\limsup_{n\to\infty} \sup_{|A| \le k} |S_{\rm rank}(A, Q_n) - S(A, Q_n)| = 0.
\end{equation}
\end{lemma}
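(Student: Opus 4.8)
The plan is to view $S_{\rm rank}(\cdot, Q_n)$ as a plug-in version of $S(\cdot, Q_n)$ in which the law $G$ of the pairwise dissimilarity $D := \dist(X, X')$ (with $X, X'$ independent of law $Q$) is replaced by the empirical c.d.f.\ of the $\binom{n}{2}$ observed pairwise dissimilarities. Concretely, put
\begin{equation}
G_n(t) := \binom{n}{2}^{-1} \#\big\{(l,m): l < m,\ \dist(x_l, x_m) \le t\big\},
\end{equation}
and adopt the natural tie-breaking convention $R_i(a) = \#\{(l,m): l < m,\ \dist(x_l, x_m) \le \dist(x_i, a)\}$, so that $R_i(a)/\binom{n}{2} = G_n(\dist(x_i, a))$ exactly. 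Since $S(A, Q_n) = \frac1n \sum_{i=1}^n \min_{a \in A} G(\dist(x_i, a))$, we then have
\begin{equation}
S_{\rm rank}(A, Q_n) - S(A, Q_n) = \frac1n \sum_{i=1}^n \Big(\min_{a \in A} G_n(\dist(x_i, a)) - \min_{a \in A} G(\dist(x_i, a))\Big).
\end{equation}

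The first step is to pass to a bound that is uniform in $A$. Applying the elementary inequality $\big|\min_{a} u(a) - \min_{a} v(a)\big| \le \max_{a} |u(a) - v(a)|$ termwise, followed by $\max_{a \in A} |G_n(\dist(x_i, a)) - G(\dist(x_i, a))| \le \sup_{t \ge 0} |G_n(t) - G(t)|$, one obtains
\begin{equation}
\sup_{|A| \le k} \big| S_{\rm rank}(A, Q_n) - S(A, Q_n) \big| \le \sup_{t \ge 0} \big| G_n(t) - G(t) \big|.
\end{equation}
This reduces the lemma to a Glivenko--Cantelli statement: $\sup_{t \ge 0}|G_n(t) - G(t)| \to 0$ in probability.

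The substantive step is this last one, and the obstacle is that the pairwise dissimilarities $\{\dist(x_l, x_m): l < m\}$ are not independent, so the classical Glivenko--Cantelli theorem does not apply directly. I would circumvent this via Hoeffding's averaging identity for $U$-statistics: for each fixed $t$,
\begin{equation}
G_n(t) = \frac{1}{n!} \sum_{\pi} \frac{1}{\lfloor n/2 \rfloor} \sum_{j=1}^{\lfloor n/2 \rfloor} 1_{\{\dist(x_{\pi(2j-1)}, x_{\pi(2j)}) \le t\}},
\end{equation}
the outer sum running over all permutations $\pi$ of $[n]$. For each fixed $\pi$, the inner average is, as a function of $t$, the empirical c.d.f.\ of $\lfloor n/2 \rfloor$ i.i.d.\ copies of $D$ (the pairs involved use disjoint indices); by the classical Glivenko--Cantelli theorem its sup-distance to $G$ tends to $0$ almost surely and is bounded by $1$. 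Taking the triangle inequality across the permutation average and then expectations --- all permutations yielding identically distributed summands by exchangeability of $x_1, \dots, x_n$ --- gives that $\mathbb{E}\sup_{t \ge 0}|G_n(t) - G(t)|$ is at most the expected Glivenko--Cantelli discrepancy of a sample of size $\lfloor n/2 \rfloor$, which tends to $0$ by dominated convergence. Hence $\sup_{t \ge 0}|G_n(t) - G(t)| \to 0$ in $L^1$, and a fortiori in probability, which proves the lemma. (Alternatively, one may invoke an off-the-shelf Glivenko--Cantelli theorem for $U$-empirical distribution functions.) A minor point dispatched along the way is the tie-breaking convention in the ranks: the ``$\le$'' choice above makes $R_i(a)/\binom{n}{2} = G_n(\dist(x_i, a))$ an exact identity, which is precisely what the comparison with $S(A, Q_n)$ requires.
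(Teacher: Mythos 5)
Your proposal is correct, and its skeleton is the same as the paper's: you express the ranks exactly through the empirical c.d.f.\ of the pairwise dissimilarities, $R_i(a)/\binom{n}{2} = G_n(\dist(x_i,a))$, and bound the discrepancy uniformly in $A$ by $\sup_t|G_n(t)-G(t)|$, which is precisely the paper's reduction. Where you diverge is in how that Glivenko--Cantelli-type statement for the $U$-empirical c.d.f.\ is justified. The paper invokes the law of large numbers for $U$-statistics to get pointwise convergence $G_n(t)\to G(t)$ and then passes from pointwise to uniform via the fact that $G_n$ and $G$ are distribution functions (a P\'olya-type argument, stated rather tersely with a citation to van der Vaart). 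You instead use Hoeffding's averaging identity to write $G_n$ as a permutation average of ordinary empirical c.d.f.s built from $\lfloor n/2\rfloor$ disjoint-index (hence i.i.d.) pairs, apply the classical Glivenko--Cantelli theorem to each, and conclude by exchangeability and dominated convergence that $\mathbb{E}\sup_t|G_n(t)-G(t)|\to 0$. Your route is somewhat longer but more self-contained and arguably more robust: it does not require continuity of $G$ (the P\'olya-type pointwise-to-uniform step is cleanest when the limit c.d.f.\ is continuous, an assumption the paper only introduces afterwards for Theorem~\ref{theorem_2}), it yields $L^1$ convergence, and via DKW applied to the size-$\lfloor n/2\rfloor$ subsamples it would even give an explicit rate; the paper's route is shorter and leans on standard $U$-statistics facts. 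Your explicit handling of the tie-breaking convention in the ranks is a sensible touch that the paper glosses over, and it is harmless since it affects the identity only through the choice of ``$\le$'' versus ``$<$''.
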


\begin{proof}
Let $\hat G_n$ denote the empirical distribution function of all the pairwise distances between sample points, meaning, 
\[\hat G_n(t) := \frac2{n(n-1)} \sum_{l < m} 1_{\{\dist(x_l,x_m) \le t\}}.\]
By the law of large numbers for $U$-statistics, in probability, $\hat G_n(t) \to G(t)$ as $n \to \infty$ for every fixed $t$. The Glivenko--Cantelli lemma does not quite apply as the pairwise distances are not an iid sample, but the two ingredients are there \citep{van2000asymptotic}: pointwise convergence as just stated, and the fact that $\hat G_N$ and $G$ are both distribution functions in that they both are non-decreasing from 0 to 1 on $[0,\infty)$. Hence, 
\[\eps_n := \sup_t |\hat G_n(t) - G(t)| \xrightarrow{n \to\infty} 0,\]
in probability.
We then have:
\begin{align*}
R_i(a)
&= \sum_{l < m} 1_{\{\dist(x_l,x_m) \le \dist(x_i, a)\}} \\
&= \frac{n(n-1)}2 \hat G_n(\dist(x_i, a)) \\
&= \frac{n(n-1)}2 G(\dist(x_i, a)) \pm \frac{n(n-1)}2 \eps_n,
\end{align*}
giving 
\begin{align*}
S_{\rm rank}(A, Q_n)
&= \frac1n \sum_{i=1}^n \min_{a \in A} G(\dist(x_i, a)) \pm \eps_n \\
&= S(A, Q_n) \pm \eps_n,
\end{align*}
for any finite $A$, which establishes the result.
\end{proof}

Establishing the consistency of ordinal $K$-medoids is now a straightforward consequence of Theorem~\ref{theorem_1}.
We need to assume that $G$ defined above is continuous, which is the case in the canonical situation of Remark~\ref{canonical}.

\begin{theorem}
    \label{theorem_2}
In the present context, if $A^*_n$ is a solution to ordinal $K$-medoids in the form of \eqref{k_medoids_quadruple}, then in probability,
\begin{equation}
S(A^*_n, Q) \xrightarrow{n\to\infty} \min_{|A| = k} S(A, Q).
\end{equation}
\end{theorem}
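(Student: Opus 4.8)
The plan is to observe that the population objective $S(\cdot, Q)$ in \eqref{S_population} is nothing but the risk $L(\cdot, Q)$ of Section~\ref{sec:section_2} for the particular loss function $\phi = G$, so that Theorem~\ref{theorem_1} and the lemmas behind it apply essentially verbatim, and then to use Lemma~\ref{lemma_S} as the bridge from the risk $S(\cdot, Q_n)$ to the rank-based risk $S_{\rm rank}(\cdot, Q_n)$ that ordinal $K$-medoids actually minimizes.

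First I would verify that $G$ is an admissible loss in the sense of Section~\ref{sec:section_2}: it is non-decreasing since it is a cumulative distribution function, it is continuous by assumption, and $G(d) = 0$ if and only if $d = 0$, because $G(d) \ge Q(B(x_0, d/2))^2 > 0$ for any fixed $x_0 \in \supp(Q)$ and any $d > 0$, while $G(0) = \P(\dist(X, X') = 0) = 0$ since $Q$ is non-atomic in the canonical situation of Remark~\ref{canonical}. With this identification, $S(A, Q_n) = L(A, Q_n)$ and $S(A, Q) = L(A, Q)$ for $\phi = G$, so Lemma~\ref{lemma_equivalence}, Lemma~\ref{lemma_continuous} and Theorem~\ref{theorem_1} all hold with this loss. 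In particular, $\delta_n := \sup_{|A| \le k} |S(A, Q_n) - S(A, Q)| \to 0$ in probability, and any solution $\bar A_n$ of the standard $K$-medoids problem $\min\{S(A, Q_n) : A \subset \cX_n,\, |A| = k\}$ satisfies $S(\bar A_n, Q) \to m^* := \min_{|A| = k} S(A, Q)$ in probability.

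Next I would compare the ordinal $K$-medoids solution $A^*_n$ with the reference $\bar A_n$. With $\eps_n := \sup_{|A| \le k} |S_{\rm rank}(A, Q_n) - S(A, Q_n)| \to 0$ from Lemma~\ref{lemma_S}, and using that $A^*_n$ minimizes $S_{\rm rank}(\cdot, Q_n)$ over $\cX_n$ while $\bar A_n$ minimizes $S(\cdot, Q_n)$ over $\cX_n$,
\begin{align*}
S(A^*_n, Q)
&\le S(A^*_n, Q_n) + \delta_n
 \le S_{\rm rank}(A^*_n, Q_n) + \eps_n + \delta_n \\
&\le S_{\rm rank}(\bar A_n, Q_n) + \eps_n + \delta_n
 \le S(\bar A_n, Q_n) + 2\eps_n + \delta_n
 \le S(\bar A_n, Q) + 2\eps_n + 2\delta_n .
\end{align*}
On the other hand, $A^*_n$ is a $k$-tuple contained in $\cX_n \subset \supp(Q)$, so $S(A^*_n, Q) \ge m^*$ trivially. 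Since the right-hand side of the display converges to $m^*$ in probability, a squeeze yields $S(A^*_n, Q) \to m^*$ in probability, which is the assertion of Theorem~\ref{theorem_2}.

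I do not expect a real obstacle: the argument is a reduction to Theorem~\ref{theorem_1} with the loss $\phi = G$, glued to the bridging Lemma~\ref{lemma_S}. The only steps that need a little care are the verification that $G$ is an admissible loss --- in particular that $G(0) = 0$, which is exactly where non-atomicity of $Q$ (automatic in the canonical setting) is used --- and checking that the $\sup_{|A| \le k}$ quantifiers in Lemmas~\ref{lemma_equivalence} and~\ref{lemma_S} are compatible with the $|A| = k$ minimizers appearing in the chain of inequalities.
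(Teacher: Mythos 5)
Your proposal is correct and follows essentially the same route as the paper: Lemma~\ref{lemma_S} bridges $S_{\rm rank}(\cdot,Q_n)$ and $S(\cdot,Q_n)$, and then Theorem~\ref{theorem_1} is invoked with the loss $\phi = G$, your explicit chain of inequalities simply spelling out what the paper summarizes as ``asymptotically equivalent.'' Your extra check that $G$ is an admissible loss (in particular $G(0)=0$ via non-atomicity in the canonical setting) is exactly the point the paper disposes of with the remark that $G$ satisfies the same properties assumed of $\phi$.
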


\begin{proof}
By Lemma \ref{lemma_S}, we have that ordinal $K$-medoids \eqref{k_medoids_quadruple} is asymptotically equivalent to the following problem:
\begin{equation}
\text{minimize} \ S(A,Q_n) \quad
\text{over} \ A \subset \cX_n,\ |A| = k.
\end{equation}
But $S$ is exactly as $L$ in Section~\ref{sec:section_2}, with $G$ replacing $\phi$ there, and since $G$ satisfies the same properties assumed of $\phi$, Theorem~\ref{theorem_1} applies to yield the claim.
\end{proof}

\subsection{Triple Comparisons}
\label{subsec:3_2}
We turn to a situation in which only triple comparisons of the form `Is $\dist(x_i, x_j)$ larger or smaller than $\dist(x_i, x_l)$?' are available. We do assume that all of these comparisons are on hand. Equivalently, this is a situation in which an ordering of the pairwise dissimilarities involving a particular point are available.

Hence, we work here with the ranks (re)defined as follows.
For $i \in [n]$, let $R_i(a)$ denote the rank of $\dist(x_i, a)$ among $\{\dist(x_i, x_j): j \ne i\}$, and for a $k$-tuple $A$, define 
\begin{align}
S_{\rm rank}(A, Q_n) = \frac{1}{n} \sum_{i=1}^n \min_{a \in A} \frac{R_i(a)}{n-1}.
\end{align}
Ordinal $K$-medoids and (restricted) ordinal $K$-means are otherwise defined as before.
The population equivalent to ordinal $K$-means is now given by
\begin{equation}
\text{minimize} \ S(A,Q) \quad
\text{over} \ A \subset \supp(Q),\ |A| = k,
\end{equation}
where 
\begin{align}
\label{S_population_triple}
S(A, Q) := \int \min _{a \in A} G^x(\dist(x, a)) dQ(x),
\end{align}
with
\begin{equation}
    \label{gd_definition}
    G^x(t) := \P(\dist(x, X^\prime)\leq t),
\end{equation}
$X^\prime$ having distribution $Q$. 

\begin{lemma}
\label{lemma_S_triple}
The following holds in probability:
\begin{equation}
\limsup_{n\to\infty} \sup_{|A| \le k} |S_{\rm rank}(A, Q_n) - S(A, Q_n)| = 0.
\end{equation}
\end{lemma}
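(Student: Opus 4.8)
The plan is to mirror the proof of Lemma~\ref{lemma_S}, the only substantive change being that the single empirical distribution function $\hat G_n$ of all pairwise distances is replaced by the $n$ empirical distribution functions $\hat G_n^{x_i}(t) := \frac1{n-1}\sum_{j\ne i} 1_{\{\dist(x_i,x_j)\le t\}}$, one centered at each sample point. For each fixed $i$, the summands $1_{\{\dist(x_i,x_j)\le t\}}$ over $j\ne i$ are (conditionally on $x_i$) iid Bernoulli with mean $G^{x_i}(t)$, so the ordinary Glivenko--Cantelli theorem gives $\sup_t|\hat G_n^{x_i}(t) - G^{x_i}(t)| \to 0$. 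The issue is that we need this uniformly over $i$, and the centers $x_i$ themselves are random and growing in number.

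First I would set $\eps_n := \max_{i\in[n]} \sup_t |\hat G_n^{x_i}(t) - G^{x_i}(t)|$ and aim to show $\eps_n \to 0$ in probability. I would obtain this via a uniform Dvoretzky--Kiefer--Wolfowitz bound: conditionally on $x_i$, $\P\big(\sup_t|\hat G_n^{x_i}(t) - G^{x_i}(t)| > \delta \,\big|\, x_i\big) \le 2\exp(-2(n-1)\delta^2)$, a bound that does not depend on $x_i$. A union bound over $i\in[n]$ then gives $\P(\eps_n > \delta) \le 2n\exp(-2(n-1)\delta^2) \to 0$ for every fixed $\delta > 0$, which is the desired convergence in probability. (If one prefers to avoid invoking DKW, the compactness argument of Lemma~\ref{lemma_dense} can be adapted: cover $[0,\diam(\cX)]$ by finitely many intervals, control $\hat G_n^{x_i}$ at the grid points using Hoeffding plus a union bound over both the grid and over $i$, and fill the gaps using monotonicity of both $\hat G_n^{x_i}$ and $G^{x_i}$; the continuity of $t\mapsto G^x(t)$, which is being assumed, is what makes the grid argument close.) I would also note the harmless book-keeping point that $\hat G_n^{x_i}$ is computed over the $n-1$ points $x_j$, $j\ne i$, whereas $R_i(a)$ is the rank among those same $n-1$ distances, so $R_i(a) = (n-1)\hat G_n^{x_i}(\dist(x_i,a))$ exactly.

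Once $\eps_n \to 0$ in probability is in hand, the rest is the verbatim computation from Lemma~\ref{lemma_S}: for any finite $A$ and any $i$,
\begin{equation*}
\frac{R_i(a)}{n-1} = \hat G_n^{x_i}(\dist(x_i,a)) = G^{x_i}(\dist(x_i,a)) \pm \eps_n,
\end{equation*}
so taking the minimum over $a\in A$ (which preserves the $\pm\eps_n$ envelope since it is additive and uniform in $a$) and averaging over $i$ yields $S_{\rm rank}(A,Q_n) = S(A,Q_n) \pm \eps_n$ for every $A$ with $|A|\le k$ simultaneously, whence $\sup_{|A|\le k}|S_{\rm rank}(A,Q_n) - S(A,Q_n)| \le \eps_n$ and the claim follows.

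The main obstacle is the uniformity over the random centers $x_1,\dots,x_n$: a naive appeal to Glivenko--Cantelli handles each $\hat G_n^{x_i}$ but not the maximum over $i$ of $n$ such quantities. The point that makes it work is that the concentration rate is exponential in $n$ with a constant independent of the center, so a union bound over the $n$ centers costs only a factor of $n$ and is absorbed. A secondary subtlety, also present in Lemma~\ref{lemma_S}, is that one should be slightly careful that the $\pm\eps_n$ error survives the $\min_{a\in A}$ and the average over $i$ uniformly in $A$; this is immediate because $\eps_n$ is a single quantity not depending on $A$, $a$, or $i$.
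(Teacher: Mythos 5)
Your proposal is correct and follows essentially the same route as the paper: define the per-point empirical distribution functions $\hat G_{n,i}$, control $\eps_n := \max_i \sup_t |\hat G_{n,i}(t) - G^{x_i}(t)|$ via the DKW inequality (applied conditionally on $x_i$, with a bound independent of the center) plus a union bound over $i$, and then propagate the uniform $\pm\eps_n$ error through the min over $a$ and the average over $i$ exactly as in Lemma~\ref{lemma_S}. Your explicit conditioning step and the remark that $R_i(a) = (n-1)\hat G_{n,i}(\dist(x_i,a))$ holds exactly are just slightly more careful statements of what the paper does implicitly.
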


\begin{proof}
Define
\[\hat G_{n,i}(t) := \frac1{n-1} \sum_{j \ne i} 1_{\{\dist(x_i,x_j) \le t\}}.\]
By the Dvoretzky–Kiefer–Wolfowitz (DKW) inequality, for each $i$ and any $\eps>0$, we have:
\begin{equation*}
\P\Big( \sup_t |\hat G_{n,i}(t) - G^{x_i}(t)| > \epsilon\Big) \leq 2\exp(-2(n-1)\epsilon^2).
\end{equation*}
With this, and the union bound, we obtain: 
\[\eps_n := \max_i \sup_t |\hat G_{n,i}(t) - G^{x_i}(t)| \xrightarrow{n \to\infty} 0,\]
in probability.
We then have:
\begin{align*}
R_i(a)
&= \sum_{j \ne i} 1_{\{\dist(x_i,x_j) \le \dist(x_i, a)\}} \\
&= (n-1) \hat G_{n,i}(\dist(x_i, a)) \\
&= (n-1) G^{x_i}(\dist(x_i, a)) \pm (n-1) \eps_n,
\end{align*}
giving 
\begin{align*}
S_{\rm rank}(A, Q_n)
&= \frac1n \sum_{i=1}^n \min_{a \in A} G^{x_i}(\dist(x_i, a)) \pm \eps_n \\
&= S(A, Q_n) \pm \eps_n,
\end{align*}
for any finite $A$, which establishes the result.
\end{proof}

The following is our consistency result for $K$-medoids based on triple comparisons. It is not an immediate consequence of Theorem~\ref{theorem_1}, but the proof arguments are parallel. We need to make additional assumption that $(x,t) \mapsto Q(B(x,t))$ is continuous on $\cX \times (0,\infty)$. 
This is the case in the canonical situation of Remark~\ref{canonical}.

\begin{theorem}
    \label{theorem_3}
In the present context, if $A^*_n$ is a solution to ordinal $K$-medoids based on triple comparisons, then in probability,
\begin{equation}
S(A^*_n, Q) \xrightarrow{n\to\infty} \min_{|A| = k} S(A, Q),
\end{equation}
now with $S$ defined as in \eqref{S_population_triple}.
\end{theorem}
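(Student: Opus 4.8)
The plan is to mirror the three-step structure used in the proof of Theorem~\ref{theorem_1}, but with the family of integrands now being $f_A^{\mathrm{tri}}(x) := \min_{a\in A} G^x(\dist(x,a))$, where $G^x(t) = Q(B(x,t))$. By Lemma~\ref{lemma_S_triple}, ordinal $K$-medoids based on triple comparisons is asymptotically equivalent to minimizing $S(A,Q_n)$ over $A \subset \cX_n$ with $S(A,Q) = \int \min_{a\in A} G^x(\dist(x,a))\,dQ(x)$. So, writing $A_n^*$ for a solution to this $\cX_n$-restricted problem and $A_n$ for a solution to the $\supp(Q)$-restricted problem, and reducing as before to the case $\supp(Q) = \cX$ compact, it suffices to show: (i) a uniform convergence statement $\sup_{|A|=k} |S(A,Q_n) - S(A,Q)| \to 0$ in probability; (ii) a uniform continuity statement for $A \mapsto S(A,Q)$; and (iii) that for the $B_n^*$ obtained by snapping each point of $A_n$ to its nearest sample point (so $\HD(A_n|B_n^*)\to 0$ by Lemma~\ref{lemma_dense}), one has $\limsup_n [S(A_n^*,Q) - S(A_n,Q)] \le 0$. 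Step (iii) is then identical to the squeeze argument already written out in the proof of Theorem~\ref{theorem_1}, once (i) and (ii) are in hand.

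For step (i), I would again invoke Lemma~\ref{rao_theorem_32} (Rao's theorem) with $\mu_n = Q_n$, $\mu = Q$: the envelope is trivial since $0 \le G^x(\cdot) \le 1$, so $g\equiv 1$ works. The work is to check equicontinuity of $\cF^{\mathrm{tri}} = \{x\mapsto \min_{a\in A} G^x(\dist(x,a)) : |A|=k\}$. Here the new assumption that $(x,t)\mapsto Q(B(x,t))$ is continuous on $\cX\times(0,\infty)$ enters, and since $\cX$ is compact this map is in fact uniformly continuous there; combined with $G^x(0)=Q(\{x\})$ being controlled (one should note $Q$ has no atoms under the continuity assumption, or handle $t$ near $0$ directly) and with the $1$-Lipschitz bound $|\min_a \dist(y_0,a) - \min_a \dist(y,a)| \le \dist(y_0,y)$ already established, one gets that $|f_A^{\mathrm{tri}}(y_0) - f_A^{\mathrm{tri}}(y)|$ is small uniformly in $A$ when $\dist(y_0,y)$ is small, by splitting $|G^{y_0}(\dist(y_0,a)) - G^y(\dist(y,a))| \le |G^{y_0}(\dist(y_0,a)) - G^{y_0}(\dist(y,a))| + |G^{y_0}(\dist(y,a)) - G^{y}(\dist(y,a))|$ and controlling the first term by the modulus of continuity of $t\mapsto Q(B(y_0,t))$ and the second by the joint-continuity modulus.

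For step (ii), I would establish an analogue of Lemma~\ref{lemma_continuous}: if $h := \HD(A|B)$ and $b_a$ is the closest point of $B$ to $a$, then $\dist(x,b_a) \le \dist(x,a) + h$, hence $\min_{b\in B}\dist(x,b) \le \min_{a\in A}\dist(x,a) + h$, and therefore, since $G^x$ is non-decreasing in its argument, $\min_{b\in B} G^x(\dist(x,b)) \le G^x\big(\min_{a\in A}\dist(x,a) + h\big)$. Integrating, $S(B,Q) \le \int G^x\big(\min_{a\in A}\dist(x,a)+h\big)dQ(x)$, and I want to say this is at most $S(A,Q) + \omega^*(h)$ for some modulus $\omega^*$ with $\omega^*(h)\to 0$ as $h\to 0$. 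This follows from the fact that $(x,t)\mapsto Q(B(x,t))$ is uniformly continuous on the compact $\cX\times[0,\diam(\cX)]$, so $G^x(t+h) - G^x(t)$ is uniformly small in $x$ and in $t$ when $h$ is small, and then dominated convergence (or just the uniform bound) passes this through the integral. The quantity $\HD(A_n|B_n^*) \to 0$ in probability by Lemma~\ref{lemma_dense}, exactly as in Theorem~\ref{theorem_1}.

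The main obstacle is step (i), specifically the equicontinuity of $\cF^{\mathrm{tri}}$, because unlike in the metric case the "loss" $G^x$ now depends on the base point $x$ as well as the argument, so one cannot simply reduce to a single modulus of continuity of a fixed function $\phi$; one genuinely needs the joint-continuity hypothesis on $(x,t)\mapsto Q(B(x,t))$, and one must be careful near $t=0$ (where $G^x$ could a priori jump by the mass of an atom at $x$) — this is where noting that the continuity assumption forces $Q$ to be non-atomic, or restricting attention to $t$ bounded away from $0$ combined with a separate easy argument for small $t$, is needed. Once equicontinuity is secured, Rao's theorem gives (i) verbatim, (ii) is a routine adaptation of Lemma~\ref{lemma_continuous}, and (iii) is copied from the proof of Theorem~\ref{theorem_1}.
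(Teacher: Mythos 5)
Your proposal is correct and follows essentially the same route as the paper: reduce via Lemma~\ref{lemma_S_triple}, then rerun the three-step argument of Theorem~\ref{theorem_1} with the uniform convergence (Rao's theorem, envelope $g\equiv 1$, equicontinuity from the joint continuity of $(x,t)\mapsto Q(B(x,t))$ on the compact support) and the uniform continuity analogue of Lemma~\ref{lemma_continuous} with modulus $\Omega(0,\cdot)$, followed by the same squeeze step. Your two-term split of $|G^{y_0}(\dist(y_0,A)) - G^{y}(\dist(y,A))|$ and your caution about behavior near $t=0$ are minor presentational refinements of what the paper does in one step with the joint modulus $\Omega$.
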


\begin{proof}
By Lemma \ref{lemma_S_triple}, we have that ordinal $K$-medoids is asymptotically equivalent to the following problem:
\begin{equation}
\text{minimize} \ S(A,Q_n) \quad
\text{over} \ A \subset \cX_n,\ |A| = k.
\end{equation}
But unlike the situation in Theorem~\ref{theorem_2}, now $S$ is {\em not} exactly as $L$ in Section~\ref{sec:section_2}, complicating matters a little bit. Nevertheless, the proof arguments are parallel to those underlying Theorem~\ref{theorem_1}.
As we did there, we need to establish uniform convergence and uniform continuity. As before, we may assume without loss of generality that $\cX$ is compact and that $\supp(Q) = \cX$. In that case, $(x,t) \mapsto Q(B(x,t))$ is uniformly continuous, and we let $\Omega$ denote its modulus of continuity so that 
\begin{equation*}
\big|Q(B(x,s)) - Q(B(y,t))\big|
\le \Omega(\dist(x,y), |s-t|),
\end{equation*}
for all $x,y \in \cX$ and all $s,t  > 0$.

For the uniform convergence, the proof of Lemma~\ref{lemma_equivalence} proceeds as before until the very end where instead
\begin{align}
    &\left|f_A(y_0) - f_A(y)\right| \\ 
    &= \left|\min_{a \in A} G^{y_0}(\dist(y_0, a)) - \min_{a \in A} G^y(\dist(y, a))\right| \\
    &= \left|G^{y_0}(\dist(y_0, A)) - G^y(\dist(y, A))\right| \\
    &\le \Omega(\dist(y_0, y), |\dist(y_0,A) - \dist(y,A)|) \\
    &\le \Omega(\dist(y_0, y), \dist(y_0, y)) \\
    &\to 0, \quad \text{when } \dist(y_0, y) \to 0.
\end{align}

For the uniform continuity, the proof of Lemma~\ref{lemma_continuous} proceeds as before except that
\begin{align}
    &\min_{a \in A} G^x(\dist(x, a)) - \min_{b \in B} G^x(\dist(x, b)) \\
    &= G^x(\dist(x, A)) - G^x(\dist(x, B)) \\
    &\geq -\Omega(0, h), 
\end{align}
with $h := \HD(A|B)$ as in that proof, so that the statement of that lemma continues to hold but with $\omega(t) := \Omega(0,t)$.
\end{proof}

\subsection{Simulations}
\label{subsec:3_simulation}

We again report on a numerical experiment showcasing the results derived in this section in the context of ordinal clustering. We chose to work with a sample of size $n=750$. We generate data from three equally weighted Gaussian distributions in two dimensions, centered at $(-0.5,0)$, $(0.5,0)$ and $(0, \sqrt{3}/2)$, each with covariance $0.05 \times I_2$. Each setting is repeated $50$ times. The result of our experiment is summarized in Table~\ref{tab:ranks_3_clusters}. As can be seen from this experiment, $K$-medoids based on ordinal information performs nearly as well as $K$-medoids based on the full dissimilarity information.

\begin{table}
\centering
\caption{Mean values and standard deviations of the Average Center Error (error) and the Adjusted Rand Index (ARI) for various metrics and loss functions for $K$-medoids based on triple-comparisons (TC), quadruple-comparisons (QC), and the actual distances (KM).}
\centering
\label{tab:ranks_3_clusters}
\bigskip
\setlength{\tabcolsep}{0.05in}
\begin{tabular}{p{0.04\textwidth} p{0.12\textwidth}  p{0.08\textwidth} p{0.08\textwidth} p{0.08\textwidth} }
\toprule
\text{   } & \text{   } & {\bf TC} & {\bf QC} & {\bf KM} \\ \midrule
$L_1$ & error [$\times 10^{-2}$] & 4.4 & 3.6 & 3.7  \\
\text{   } & \text{   } & (1.3) & (1.3) & (1.2) \\
\text{   } & ARI & 0.924 & 0.924 & 0.925\\ 
\text{   } & \text{   } & (0.014)& (0.014) & (0.014)\\ 
\midrule

$\sqrt{L_2}$ & error [$\times 10^{-1}$] & 1.8 & 1.6 & 1.7  \\
\text{   } & \text{   } & (0.3) &  (0.2) & (0.3) \\
\text{   } & ARI & 0.928 & 0.929 & 0.929\\ 
\text{   } & \text{   } & (0.016)& (0.015) & (0.016)\\ 
\midrule

$L_2$ & error [$\times 10^{-2}$] & 3.2 & 2.7 & 2.7  \\
\text{   } & \text{   } & (0.8) &  (0.9) & (0.9) \\
\text{   } & ARI & 0.934 & 0.933 & 0.933\\ 
\text{   } & \text{   } & (0.016)& (0.016) & (0.016)\\ 
\midrule

${L_2}^2$ & error [$\times 10^{-3}$] & 1.4 &  0.9 & 0.8  \\
\text{   } & \text{   } & (0.7) &  (0.5) & (0.5) \\
\text{   } & ARI & 0.931 & 0.931 & 0.930\\ 
\text{   } & \text{   } & (0.020)& (0.019) & (0.020)\\ 
\midrule

$L_\infty$ & error [$\times 10^{-2}$] & 3.0 &  2.6 & 2.7  \\
\text{   } & \text{   } & (1.1) &  (1.0) & (1.0) \\
\text{   } & ARI & 0.918 & 0.917 & 0.918\\ 
\text{   } & \text{   } & (0.017)& (0.017) & (0.017)\\ 
\bottomrule
\end{tabular}
\end{table}

\section{Discussion}

In this paper, we have shown the asymptotic equivalence of $K$-means and $K$-medoids, and used this equivalence to prove the consistency of $K$-medoids in metric and non-metric situations.

\subsection{Consistency of the Solution}
Our consistency results are on the value of the optimization problem defining $K$-medoids in the various settings we considered. Specifically, we showed in each case that $T(A^*_n, Q) \to_{n\to\infty} \min_A T(A, Q)$, in probability, where $T$ is an appropriate criterion (either $L$ or one of the two variants of $S$) and $A^*_n$ is the solution to $K$-medoids. What about the behavior of the solution $A^*_n$ itself?

Here the situation is completely generic: if the solution to the population problem, namely $A_{\rm opt} := \argmin_A T(A, Q)$, is unique, then $A^*_n \to_{n\to\infty} A_{\rm opt}$, again in probability.
This is simply due to the fact that in our setting we can reduce the situation to when $\cX$ is compact, and in all cases we considered $A \mapsto T(A, Q)$ is continuous.

\subsection{Clustering After Embedding?}
It might be possible to establish the consistency of ordinal $K$-medoids building on the consistency of ordinal embedding. This route appears unnecessarily sophisticated, however, in particular in light of a more straightforward approach that we built on the work of \cite{parna1986strong}. And from a computational standpoint, performing $K$-medoids in the ordinal setting has essentially the same complexity as in the regular (i.e., metric) setting, while methods for ordinal embedding tend to be much more demanding in computational resources.  
 
\subsection{A `Bad' Variant of $K$-Medoids} 
In the setting where triple comparisons are available, instead of defining the ranks as we did, we could have worked with the following definition. 
For $i \in [n]$, let $R_i(a)$ denote the rank of $\dist(x_i, a)$ among $\{\dist(x_j, a): j \in [n]\}$.
Although the resulting method can be analyzed in very much the same way, it turns out to not be useful for the purpose of clustering.
This is due to the fact that the corresponding optimization problem accepts a large range of solutions. To see this, consider the case $k=1$. With the corresponding definition of $S_{\rm rank}$, we have that 
\begin{equation}
S_{\rm rank}(a,Q_n) = \frac{1 + 2 + \dots + n}{n(n-1)},
\end{equation}
for all $a \in \{x_1,...,x_n\}$.
And the problem persists for other values of $k$.
For another example, consider clustering points distributed uniformly between $[-1,1]$ into $k=2$ clusters. It is clear that the correct population centers for $K$-means here are $\{-1/2, 1/2\}$. However, it can be seen that for any $1/2 \leq c \leq 1$, $A = \{-c, c\}$ also achieves the optimal population risk. 

\section*{Acknowledgements}
We are very grateful to Professor Kalev P\"arna for sharing with us his papers, which we could not otherwise access. 
This work was partially supported by the National Science Foundation (DMS 1916071).  

\bibliographystyle{chicago}
\bibliography{robust.bib}

\end{document}